\theoremstyle{plain}
\newtheorem{thm}{Theorem}[section]
\newtheorem{cor}[thm]{Corollary}
\newtheorem{lem}[thm]{Lemma}
\newtheorem{prop}[thm]{Proposition}
\newtheorem{conjecture}[thm]{Conjecture}
\theoremstyle{definition}
\newtheorem{remark}[thm]{Remark}
\newtheorem{example}[thm]{Example}
\newtheorem{defn}[thm]{Definition}
\newcommand{\bee}[1]{\begin{equation}\label{#1}}
\newcommand{\beq}[1]{\begin{eqnarray}\label{#1}}
\newcommand{\ene}{\end{equation}}
\newcommand{\eqe}{\end{eqnarray}}
\title{The Murnaghan-Nakayama rule and some virtual $S_n$ characters}
\author{ Amitai Regev \\Mathematics Department\\The Weizmann Institute\\Rehovot 76100\\Israel}
\begin{document}

\maketitle

\bigskip

{\bf Abstract}. We construct certain virtual characters for the symmetric groups, then compute a
formula which calculates the values of these   virtual characters.

\section{Introduction}
Partitions are denoted here by $\lambda, \mu,\nu,$ etc., and a partition is identified with its Young diagram. As usual, $\lambda'$ denotes the conjugate partition of $\lambda$.
We write $\lambda\vdash n$ if $\lambda$ is a partition of $n$. When the characteristic of the base field is zero, the partitions
$\lambda\vdash n$ are in a one-to-one correspondence with the irreducible $S_n$-characters,
denoted $\chi^\lambda$,~\cite{jameskerber},~\cite{macdonald},~\cite{sagan}. An integer-combination
of irreducible characters is called a virtual character.

\medskip

We construct certain virtual characters $\psi_{\nu,n}$  of the symmetric group $S_n$. Here $\nu$
is a partition of $k$ where $k$ is much smaller that $n$.
These virtual characters are alternating sums of  certain irreducible  $S_n$ characters. The
main result here is, that the
values  $\psi_{\nu,n}(\mu)$ of these virtual characters on the partitions $\mu\vdash n$ are given by
one character formula. This is Theorem~\ref{v4} below. This formula shows that
the character tables of the symmetric groups satisfy many relations and identities.

\section{The virtual $S_n$-characters  $\psi_{\nu,n}$}\label{section2}
 Let $\nu=(\nu_1,\nu_2,\ldots)\vdash k$, and $n\ge 2k+2$.
First form $\nu^{(1)}=(\nu_1+n-k,\nu_2,\nu_3,\ldots)$. This is the diagram $\nu$ with $n-k$ additional boxes attached to its first row.
Now pull these added $n-k$ boxes  down and left around the diagram $\nu$ as follows.
Think of these $n-k$ boxes as a train pulled by the first (i.e.~left) cell.
And pull it down and left around $\nu$
so that the result is a partition (containing $\nu$).
We call this process "going around $\nu$", see Example~\ref{v2}.
This process, which
is analized in Section~\ref{section2.1}, yields an
ordered sequence of partitions of $n$: $\nu^{(1)},\nu^{(2)},\ldots\vdash n$.

\medskip
For example, verify that $\nu^{(2)}=(n-k+\nu_2-1,\nu_1+1,\nu_3,\nu_4,\ldots )$.
 Note that since $n\ge 2k+2$ and $\nu\vdash k$, it follows that $n-k+\nu_2-1\ge \nu_1+1$, and therefore $\nu^{(2)}$ is indeed a partition.

\begin{defn} (The virtual character $\psi_{\nu,n}$.)
Given $\nu\vdash k$ and $n\ge k$ (we usually require that $n\ge 2k+2$),
with the partitions of $n$ obtained by going around $\nu$,
these partitions are ordered as first, second, third, etc. We define
$\psi_{\nu,n}$  to be the alternating sum of the corresponding irreducible $S_n$ characters.

\end{defn}

\begin{example}\label{v2}
1.

\medskip
Going around the empty diagram $\nu=\emptyset$ we get the following sequence of diagrams:

\medskip

$(n)\to (n-1,1)\to (n-2,1^2)\to\cdots\to (2,1^{n-2})\to (1^n)$.

\medskip

Thus $$\psi_{\emptyset,n}= \sum_{j=0}^{n-1}(-1)^j\chi^{(n-j,1^j)}.
 $$

\medskip
2.~~Going around $\nu=(1)$. Here $k=1$,  so $n\ge 4$. Get the diagrams

\medskip

$(n)\to (n-2,2)\to (n-3,2,1)\to(n-4,2,1^2) \to\cdots\to (3,2,1^{n-5})\to(2,2,1^{n-4})\to (1^n)$.

\medskip
Therefore
$$
 \psi_{(1),n}=\chi^{(n)}+\sum_{j=0}^{n-4}(-1)^{j+1}\chi^{(n-2-j,2,1^j)} + (-1)^n\chi^{(1^n)}.
 $$

\medskip
3.~~Going around $(2)$. Here $k=2$ so $n\ge 6$.
Get the partitions

\medskip
$(n)\to(n-3,3)\to (n-4,3,1)\to (n-5,3,1^2)\to (n-6,3,1^3)\to\cdots\to (4,3,1^{n-7})\to \\
\to (3,3,1^{n-6})  \to(2^2,1^{n-4}) \to (2,1^{n-2})  $.

\medskip
Thus
\begin{eqnarray}\label{formula5}
 \psi_{(2),n}=\chi^{(n)}+\sum_{j=0}^{n-6}(-1)^{j+1}\chi^{(n-3-j,3,1^{j})} +
 (-1)^{n-1}\chi^{(2,2,1^{n-4})}+(-1)^n\chi^{(2,1^{n-2})}.
 \end{eqnarray}
\end{example}
\subsection{The general construction}\label{section2.1}
\begin{defn}\label{height1}
1. ~Let $n\ge k$, $\nu\vdash k$, and $\eta\vdash n$.
We shall assume that $n\ge 2k+2$.
Assume $\nu \le\eta$ and that $S$ is a part of the rim of $\eta$
such that $\eta\setminus S=\nu$, then we write $\eta=\nu*S$. Let $h(S)$ denote the height of $S$.

\medskip
2. ~Given such  $\eta=\nu*S$, we say that "$S$ covers $\nu$" if $\nu'_1\le h(S)$. Otherwise $S$ covers
only an upper part of $\nu$.

\medskip
3. ~Given $\nu\vdash k,$ we start by constructing $\nu*S_1$. Here $S_1 $ is the one row
of length $n-k$, added to the first row
of $\nu$. Clearly, $h(S_1)=1$. Continue and construct the sequence of partitions
$\nu*S_1$, $~\nu*S_2$, $~\ldots ~,~\nu*S_{n-k}.$
Lemma~\ref{height2} shows that  $h(S_j)=j$
for $1\le j\le n-k$, and the process stops at $j=n-k$, namely after $n-k$ steps.

\medskip
4. ~Define the virtual $S_n$ character $\psi_{\nu,n}$ as follows:
\begin{eqnarray}\label{psi1}
\psi_{\nu,n}:=\sum_{j=1}^{n-k} (-1)^{j+1}\chi^{\nu*S_j}.
\end{eqnarray}

5.~ We say that $\nu*S$ {\it "has a tail"} if $\nu*S=\mu=(\mu_1,\mu_2,\ldots)$ where
$\mu_1\ge \nu_1+1$.
\end{defn}

The following unique-decomposition lemma is crucial here.
\begin{lem}\label{uniqueness1}
Let $n\ge 2k+2$. For $i=1,2$ let $\nu^{(i)}\vdash k$, and let $S^{(i)}$ be a rim of
$\nu^{(i)}*S^{(i)}$ of length $n-k$.
$$
 \mbox {If}\quad\nu^{(1)}*S^{(1)}=\nu^{(2)}*S^{(2)}\quad\mbox{then}\quad \nu^{(1)}=\nu^{(2)}
\quad\mbox{and}\quad S^{(1)}=S^{(2)}.
$$
\end{lem}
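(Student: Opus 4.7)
The plan is to reduce this uniqueness statement to a single assertion about hook lengths. By the classical correspondence between removable $\ell$-rim-hooks of a partition $\mu$ and cells $(i,j)\in\mu$ with hook length $h_\mu(i,j)=\ell$ (see e.g.~\cite{jameskerber,macdonald}), specifying a decomposition $\mu=\nu*S$ with $|\nu|=k$ and $|S|=n-k$ is the same as specifying one cell of $\mu$ with hook length $h:=n-k$. So it suffices to prove: if $\mu\vdash n$ and $2h\ge n+2$ (equivalently $n\ge 2k+2$), then $\mu$ has at most one cell of hook length $h$.

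To prove this I would assume two distinct cells $(i_1,j_1)\ne(i_2,j_2)$ in $\mu$ both have hook length $h$ and derive a contradiction. Writing $h_\mu(i,j)=(\mu_i-j)+(\mu'_j-i)+1$, the value strictly decreases as $j$ increases along a fixed row (because $\mu'_j$ is weakly decreasing in $j$), and symmetrically down a fixed column. Hence the two cells cannot share a row or column, and one may assume $i_1<i_2$ with $j_1\ne j_2$. The rest is a careful accounting of how the two hook $L$-shapes
\[
H_t=\{(i_t,j):j_t\le j\le\mu_{i_t}\}\cup\{(i,j_t):i_t<i\le\mu'_{j_t}\},\qquad t=1,2,
\]
overlap as subsets of $\mu$, knowing that $|H_1|=|H_2|=h$.

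In the NW/SE configuration $j_1<j_2$, the only candidate common cells are $(i_1,j_2)$ and $(i_2,j_1)$, and neither actually lies in both $L$-shapes (one fails the column-range condition of the other hook, the other fails the row-range condition), so $H_1\cap H_2=\emptyset$ and $2h=|H_1|+|H_2|\le n$, contradicting $2h\ge n+2$. In the NE/SW configuration $j_1>j_2$ the $L$-shapes can genuinely cross, but the same bookkeeping reduces the intersection to the single cell $(i_2,j_1)$ (and only in the event that $(i_2,j_1)\in\mu$), giving $|H_1\cap H_2|\le1$ and hence $2h-1\le n$, which again contradicts the hypothesis. The NE/SW subcase is the main obstacle, since it is the one where the two hooks really overlap and one has to pin down that the overlap cannot exceed a single cell; once that is in hand, the uniqueness of $\nu^{(i)}$ and $S^{(i)}$ follows at once from the rim-hook/hook-length bijection cited at the start.
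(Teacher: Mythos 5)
Your proposal is correct, but it proves the lemma by a genuinely different route than the paper. The paper argues directly on the diagrams: assuming $\nu^{(1)}*S^{(1)}=\nu^{(2)}*S^{(2)}$ with $\nu^{(1)}\ne\nu^{(2)}$, a cell of $\nu^{(2)}$ lying on $S^{(1)}$ splits $S^{(1)}$ into pieces, and a case analysis (according to whether $S^{(1)}$ covers all of $\nu^{(1)}$ or only an upper/left part of it) shows each candidate piece for $S^{(2)}$ has length strictly less than $n-k$, a contradiction. You instead invoke the classical bijection between removable rim hooks of length $\ell$ in $\mu$ and cells of $\mu$ with hook length $\ell$, which converts the lemma into the clean numerical statement that a partition of $n$ has at most one cell of hook length $h$ once $2h\ge n+2$; your two-configuration analysis of the overlap of the two $L$-shaped hooks is sound (in the NW/SE configuration they are disjoint, forcing $2h\le n$; in the NE/SW configuration they meet in at most the single cell $(i_2,j_1)$, forcing $2h-1\le n$), and both bounds contradict $2h=2(n-k)\ge n+2$, which is exactly the hypothesis $n\ge 2k+2$. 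What your approach buys is a reusable, sharper-looking fact about hook lengths proved by pure counting, with the threshold $n\ge 2k+2$ emerging transparently from the inequality (and the paper's counterexample $(3,3)$ with $h=3$, $n=6$ sitting exactly at the boundary $2h=n$); what the paper's approach buys is self-containedness, since it never needs the rim-hook/hook-length correspondence. One point to make explicit if you write this up: the reduction uses that \emph{every} decomposition $\mu=\nu*S$ into a partition plus a removable rim hook arises from exactly one cell under the bijection, so that uniqueness of the cell of hook length $n-k$ really does give uniqueness of both $S$ and $\nu=\mu\setminus S$; this is the standard form of the bijection, so the step is legitimate, but it is doing real work and deserves a sentence.
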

Note that Example~\ref{example1} below shows that the condition $n\ge 2k +2$ is necessary.
\begin{proof}
Denote $\eta=\nu^{(1)}*S^{(1)}=\nu^{(2)}*S^{(2)}$.
There are three cases to consider.

\medskip
Case 1: $S^{(1)}$ covers an upper part of $\nu^{(1)}$ (and has a North-East tail).

\medskip
case 2: The conjugate of case 1.

\medskip
Case 3: $S^{(1)}$ completely covers $\nu^{(1)}$ (and might have a North-East and/or
South-East tails).

\medskip
Case 1: Here the rim of $\nu^{(1)}*S^{(1)}$ contains $S^{(1)}$ and possibly an additional part which
is a part of $\nu^{(1)}$. Assume $\eta=\nu^{(1)}*S^{(1)}=\nu^{(2)}*S^{(2)}$ with
$\nu^{(1)}\ne \nu^{(2)}$. Then $\nu^{(2)}$ contains a cell of $S^{(1)}$. This cell splits
 $S^{(1)}$ into two parts:  the  North-East part of $S^{(1)}$ and the
South-East part (which contains
the South-East part of $S^{(1)}$). Let $\tilde S$ denote the South-East part of $S^{(1)}$ together with the lower part of the
rim of $\nu^{(2)}$. We need to show that neither the  North-East part of $S^{(1)}$, nor  $\tilde S$
can be    $S^{(2)}$.

\medskip

The  North-East part is properly contained in  $S^{(1)}$  hence has length
strictly less than $|S^{(1)}|=n-k$, thus this part cannot be $S^{(2)}$.

\medskip

Since we are in case 1, the  part $\tilde S$ is a part of
$\nu^{(2)}$. But $\nu^{(2)}\vdash k$, hence  $\tilde S$ is of length
$|\tilde S|\le k<n-k$, hence  this part cannot be $S^{(2)}$ either.

\medskip
So case 1 is impossible. By conjugation, case 2 is also impossible.

 \medskip
  Case 3. The argument here is similar: A cell of $\nu^{(2)}$ on $S^{(1)}$ splits
  the rim of $\nu^{(1)}*S^{(1)}$ (namely $S^{(1)}$) into
  two parts, each of length strictly less than $n-k$, hence neither can be $S^{(2)}$.
  Thus case 3 is also impossible, and the proof follows.
\end{proof}

\begin{lem}\label{height2}
Let $\nu\vdash k$ and let $n\ge 2k+2.$
As in Definition~\ref{height1}, construct the sequence of partitions
$\nu*S_1$, $~\nu*S_2$, $~\ldots ~,~\nu*S_{n-k}$. Then $h(S_j)=j$ for $1\le j\le n-k$.
\end{lem}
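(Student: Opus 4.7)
The plan is to argue by induction on $j$. For the base case $j = 1$, the ribbon $S_1$ is by construction a single horizontal row of length $n-k$ attached to the top of $\nu$, so $h(S_1) = 1$.

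For the inductive step, suppose $\eta_j := \nu * S_j$ has been built with $h(S_j) = j$. The operation of pulling the train one more step around $\nu$ extends the ribbon into rows strictly below the last row of $S_j$: one adjoins the minimal collection of new cells making $S_{j+1} = \eta_{j+1} \setminus \nu$ a connected rim hook containing no $2 \times 2$ subsquare, and simultaneously removes the same number of cells from the top-right end of $S_j$ (from the row-$1$ tail at first, then from lower rows once that tail has been consumed), so as to keep $|S_{j+1}| = n-k$ and $\eta_{j+1}$ a partition of $n$. Lemma~\ref{uniqueness1} guarantees that $\eta_{j+1}$ is then uniquely determined. A direct cell-by-cell count shows that the net change in the number of rows occupied by the ribbon is exactly $+1$, so $h(S_{j+1}) = j + 1$.

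The main obstacle is a case analysis verifying that this prescription is well defined at every stage and always produces a connected rim hook. Setting $\ell = \nu'_1$, the phases to handle separately are: (a) for $j + 1 \le \ell + 1$, row $j+1$ of $\eta_{j+1}$ is extended from $\nu_{j+1}$ to length $\nu_j + 1$ (with the convention $\nu_{\ell+1} = 0$), and the cells to delete are taken from the row-$1$ tail; (b) for $\ell + 1 < j + 1$ while the row-$1$ tail is still positive, exactly one cell is added at position $(j+1, 1)$ and one is removed from the row-$1$ tail; (c) once the row-$1$ tail has been consumed, the top of the ribbon must recede from the upper rows, and deletions and additions occur in several rows at once, always with net gain of exactly one row. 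In each case one verifies the no-$2 \times 2$ condition explicitly and checks that $\eta_{j+1}$ is a partition. The hypothesis $n \ge 2k + 2$ (equivalently $n - k \ge k + 2$) is precisely what provides the row-$1$ tail with enough slack to carry the process unambiguously through all $n - k$ steps; Example~\ref{example1} below illustrates that this bound cannot be weakened. The process terminates at $j = n - k$ because at that stage the ribbon has attained its maximum possible height for a strip of length $n - k$ addable to $\nu$.
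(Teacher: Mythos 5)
Your overall strategy --- induction on $j$ with a three--phase analysis tracking the head and the row-$1$ tail of the ribbon --- is sound, and your phases (a) and (b) coincide in substance with the paper's treatment of the early steps: as long as the tail in row $1$ is positive, each pull occupies one new row at the bottom and deletes only from that tail, so the height increases by exactly one; the hypothesis enters in both arguments through $n-k\ge k+2\ge \nu_1+\nu'_1+1$, which keeps the tail alive until $\nu$ is fully covered. The divergence is in your phase (c), and that is where the gap lies. The paper does not attempt a direct cell count there: it introduces the width $w(S)$, uses $|S|=h(S)+w(S)-1$, and observes that the receding phase of going around $\nu$ is the conjugate of the advancing phase of going around $\nu'$, so the already-established part of the argument (applied to $\nu'$) gives $w(S_j)=n-k+1-j$ and hence $h(S_j)=j$. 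You instead assert that ``a direct cell-by-cell count shows that the net change in the number of rows is exactly $+1$'' precisely in the regime where ``deletions and additions occur in several rows at once.'' That assertion \emph{is} the content of the lemma in the receding regime, and it is the only genuinely delicate point: for $\nu=(2)$, the step $(3,3,1^{n-6})\to(2,2,1^{n-4})$ loses row $1$ from the ribbon but gains two rows at the bottom, while the next step $(2,2,1^{n-4})\to(2,1^{n-2})$ loses no row at the top and gains one at the bottom; nothing in your write-up explains why these opposing effects always balance to $+1$. Either carry out that count honestly --- which in effect means redoing your phase (a)/(b) analysis upside down --- or adopt the paper's conjugation shortcut, which is designed to buy exactly this for free from the identity $|S|=h(S)+w(S)-1$.

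A secondary point: Lemma~\ref{uniqueness1} does not ``guarantee that $\eta_{j+1}$ is uniquely determined.'' That lemma states that distinct pairs $(\rho,S)$ with $|S|=n-k$ yield distinct partitions $\rho*S$; it does not single out which $S$ is the next one in the train process. The well-definedness of the step $\eta_j\to\eta_{j+1}$ must come from the geometric description of the process itself (the head advancing one position along the rim path), not from that lemma.
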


Here also, Example~\ref{example1} shows that the condition $n\ge 2k +2$ is necessary.

\begin{proof}
Analogue to the height $h(S)$, we also define the width $w(S)$ (which is the height of the
conjugate rim $S'$). By projecting $S$ on the axes (see also (1.7) in~\cite{macdonald}), it follows that
\begin{eqnarray}\label{height.width}
|S|=h(S)+w(S)-1.
\end{eqnarray}

\medskip

Note that $n-k\ge \nu_1+\nu'_1+1$. Indeed, $k\ge \nu_1+\nu'_1-1$, so $n-k\ge k+2
\ge \nu_1+\nu'_1+1$.


\medskip
As in Definition~\ref{height1}.2,
construct the sequence $\nu*S_1, ~\nu*S_2,\ldots$.  For the first  $\nu_1'+1$ steps,
by construction and induction, $h(S_j)=j,~1\le j\le \nu_1'+1$
(since $|S|=n-k$, while the number of boxes in $S$ except those in the first row of $\nu*S$
is $\le \nu_1+\nu'_1\le k$).
After $\nu_1'+1$ steps, $\nu$ is "covered", with $\nu_1+\nu'_1+1$ out of the $n-k$ cells are
covering $\nu$. Since $n-k\ge \nu_1+\nu'_1+1$,
at that stage
there still is a "tail" of added boxes in the first (i.e.~top) row of $\nu*S_j,$ $j=\nu_1'+1$.

\medskip
For the remaining steps we consider the conjugate construction. Then, applying~\eqref{height.width} together with the above argument,
the proof follows.
\end{proof}

\begin{example}\label{example1}
{\bf Counter examples} when $n\not\ge 2k+2$.

\medskip
1. ~Let $k=3,~n=6,~\eta=(3,3),~\nu^{(1)}=(3), ~\nu^{(2)}=(2,1)$. Let $S^{(i)}$, $i=1,2$, satisfy
$$(3,3)=\nu^{(1)}*S^{(1)}=\nu^{(2)}*S^{(2)}.$$
Then clearly $S^{(1)}\ne S^{(2)}$, as well as $\nu^{(1)}\ne \nu^{(2)}$.

\medskip

2. ~Again let $k=3,~n=6$. Let $\nu=(3)$ and construct the sequence of partitions\\
$\nu*S_1$, $~\nu*S_2$, $~\nu*S_3\ldots $ Then $h(S_2)=1\ne 2$.

\end{example}

We proceed with the general case.

\begin{cor}\label{height3}
Let $\nu\vdash k$, and let $n\ge 2k+2$, then
$$
\psi_{\nu,n}=\sum_{S,\;\nu*S\vdash n}(-1)^{h(S)+1}\chi^{\nu*S}=\sum_{j=1}^{n-k}(-1)^{h(S_j)+1}\chi^{\nu*S_j}.
$$
\end{cor}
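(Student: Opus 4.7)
The plan is to deduce this corollary directly from the two lemmas already established. The second equality is essentially immediate from Lemma~\ref{height2}. Indeed, by Definition~\ref{height1}.4 we have $\psi_{\nu,n}=\sum_{j=1}^{n-k}(-1)^{j+1}\chi^{\nu*S_j}$, and Lemma~\ref{height2} asserts $h(S_j)=j$ for $1\le j\le n-k$, so the signs $(-1)^{j+1}$ and $(-1)^{h(S_j)+1}$ agree term by term. Thus the only real content is the first equality.

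For the first equality, the plan is to show that the indexing set $\{S_1,\ldots,S_{n-k}\}$ produced by the ``going around $\nu$'' procedure coincides with the set of all rims $S$ such that $\nu*S\vdash n$. One direction is trivial: each $S_j$ is by construction a rim with $\nu*S_j\vdash n$, and by Lemma~\ref{uniqueness1} the $\nu*S_j$ are pairwise distinct (since the $S_j$'s have pairwise distinct heights $1,2,\ldots,n-k$ by Lemma~\ref{height2}).

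For the reverse inclusion, suppose $S$ is any rim with $\nu*S\vdash n$ and $|S|=n-k$. By~\eqref{height.width} we have $h(S)+w(S)-1=n-k$, so $h(S)\in\{1,2,\ldots,n-k\}$. Set $j:=h(S)$. Then $\nu*S$ and $\nu*S_j$ are both partitions of $n$ obtained by attaching a rim of length $n-k$ and height $j$ to $\nu$; since $n\ge 2k+2$, the shape of such a rim is completely determined by its height (the rim is forced to ``wrap around'' $\nu$ in the unique way allowed by the boundary of $\nu$), so $\nu*S=\nu*S_j$. Then Lemma~\ref{uniqueness1} forces $S=S_j$, as desired.

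The main obstacle here is the geometric uniqueness statement used at the end: for a fixed $\nu\vdash k$ and a fixed height $j\in\{1,\ldots,n-k\}$, there is exactly one rim $S$ of length $n-k$ and height $j$ whose complement in $\nu*S$ is $\nu$. This is intuitively clear from the picture of ``going around'' — once $h(S)$ is fixed, the height of the South-East vertical tail is determined, then the wrap around $\nu$ is forced, and finally the length constraint $|S|=n-k$ fixes the North-East horizontal tail — but the hypothesis $n\ge 2k+2$ is essential (as Example~\ref{example1} shows), since it is precisely what guarantees that the rim is long enough to completely envelop $\nu$ and leave nontrivial tails on both ends. With this geometric observation in hand (which is exactly the content of the inductive construction analyzed in Lemma~\ref{height2}), the corollary follows.
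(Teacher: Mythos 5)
Your proof is correct and follows essentially the same route as the paper, whose entire proof of this corollary is the one-line observation that it follows from the defining equation~\eqref{psi1} together with Lemma~\ref{height2}. The one point you rightly isolate --- that the rims $S_1,\ldots,S_{n-k}$ produced by going around $\nu$ exhaust \emph{all} rims $S$ of length $n-k$ with $\nu*S\vdash n$ --- is left implicit in the paper as well; note, however, that your justification of it (uniqueness of an addable rim of given length and given height) is only sketched and is not literally the content of Lemma~\ref{height2}, which supplies one rim of each height $1,\ldots,n-k$ but not that there are no others, although the fact itself is true (for instance, one can check that there are exactly $n-k$ addable rims of length $n-k$ whenever $n-k>\nu_1+\nu_1'-1$, so the $n-k$ distinct rims $S_j$ must be all of them).
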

\begin{proof}
This follows from Equation~\eqref{psi1} (which defines $\psi_{\nu,n}$) and from Lemma~\ref{height2}.

\end{proof}

\section{A formula for $\psi_{\nu,n}(\mu)$}
Our aim is to prove the following formula.

\begin{thm}\label{v4}

Let $\nu=(\nu_1,\nu_2,\ldots )\vdash k$, $n\ge 2k+2$ . Let
$\mu=(\mu_1,\mu_2,\ldots )\vdash n$, and denote $\bar\mu=(\mu_2,\mu_3,\ldots)$,
so $\mu_1=n-k$ if and only if $\bar\mu\vdash k$. Then

\begin{eqnarray}\label{general.conjecture}
\psi_{\nu,n}(\mu)=\begin{cases}\chi^\nu(\bar\mu)\cdot(n-k) & if  ~\mu_1= n-k
\\
0 & if ~\mu_1\ne n-k\\
\end{cases}
\end{eqnarray}
\end{thm}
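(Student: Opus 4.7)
The plan is to prove Theorem \ref{v4} by identifying $\psi_{\nu,n}$, via the Frobenius characteristic map, with the product $s_\nu\cdot p_{n-k}$ of a Schur function and a power sum, and then reading off the character value at $\mu$ from the power-sum expansion.

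The first step is to show
\[
\mathrm{ch}(\psi_{\nu,n}) \;=\; s_\nu\cdot p_{n-k}.
\]
This is the Murnaghan-Nakayama rule in its symmetric-function form, namely $s_\nu\cdot p_m = \sum_\lambda (-1)^{h(\lambda/\nu)+1} s_\lambda$, summed over all $\lambda$ with $\lambda/\nu$ a rim hook of size $m$. Lemmas \ref{uniqueness1} and \ref{height2} together guarantee that for $m=n-k$ and $n\ge 2k+2$, the partitions $\nu*S_1,\ldots,\nu*S_{n-k}$ produced by the "going around" construction enumerate these rim-hook extensions exactly once with heights $h(S_j)=j$; applying $\mathrm{ch}$ to \eqref{psi1} then yields the identity above.

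Next, using $s_\nu = \sum_{\rho\vdash k} z_\rho^{-1}\chi^\nu(\rho)\,p_\rho$, multiplication by $p_{n-k}$ gives $s_\nu\,p_{n-k} = \sum_{\rho\vdash k}\chi^\nu(\rho)\,z_\rho^{-1}\,p_{\rho\cup(n-k)}$, where $\rho\cup(n-k)$ denotes $\rho$ with one part equal to $n-k$ appended. By the inverse Frobenius relation $\chi(\mu)=z_\mu\,[p_\mu]\mathrm{ch}(\chi)$, the theorem reduces to locating the coefficient of $p_\mu$ on the right. The key arithmetic input is that $n\ge 2k+2$ forces $2(n-k)\ge n+2 > n$, so $\mu$ admits at most one part of size $\ge n-k$, and such a part, if present, must be $\mu_1$. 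Hence $\mu$ contains $n-k$ as a part if and only if $\mu_1=n-k$. If $\mu_1\ne n-k$, no $\rho\vdash k$ satisfies $\rho\cup(n-k)=\mu$, so $\psi_{\nu,n}(\mu)=0$. If $\mu_1=n-k$, the unique contributor is $\rho=\bar\mu$, giving coefficient $\chi^\nu(\bar\mu)/z_{\bar\mu}$; since $n-k$ appears in $\mu$ with multiplicity one, $z_\mu=(n-k)\,z_{\bar\mu}$, yielding $\psi_{\nu,n}(\mu)=(n-k)\chi^\nu(\bar\mu)$.

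I expect the main obstacle to be the first step: verifying that "going around" covers all rim-hook extensions of size $n-k$ exactly once with the advertised heights. This is essentially packaged by the preparatory lemmas---Lemma \ref{uniqueness1} rules out double counting and Lemma \ref{height2} fixes both the heights and the total count---but one must be attentive to the $n\ge 2k+2$ hypothesis, which Example \ref{example1} shows is sharp. An alternative that avoids the symmetric-function machinery in the case $\mu_1=n-k$ is to apply the Murnaghan-Nakayama rule directly to each $\chi^{\nu*S_j}(\mu)$ by peeling off a cycle of length $n-k$: Lemma \ref{uniqueness1} implies that the only rim hook of size $n-k$ whose removal from $\nu*S_j$ leaves a partition of $k$ is $S_j$ itself, so $\chi^{\nu*S_j}(\mu)=(-1)^{h(S_j)+1}\chi^\nu(\bar\mu)$, and the defining signs in \eqref{psi1} square to $+1$, producing $(n-k)$ equal summands.
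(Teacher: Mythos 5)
Your proposal is correct, but it reaches Theorem~\ref{v4} by a genuinely different route than the paper. Both arguments rest on the same combinatorial core --- Lemma~\ref{uniqueness1}, Lemma~\ref{height2} and Corollary~\ref{height3}, which identify $\psi_{\nu,n}$ with the full signed sum over rim-hook extensions of $\nu$ by a strip of size $n-k$; your identity $\mathrm{ch}(\psi_{\nu,n})=s_\nu\, p_{n-k}$ is exactly Corollary~\ref{height3} read through the Frobenius map. From there the paths diverge. The paper stays on the character-table side: it proves Proposition~\ref{usemn} ($\sum_\lambda \chi^\lambda_{(n-k,\nu)}\chi^\lambda=\sum_{\rho\vdash k}\chi^\rho_\nu\,\psi_{\rho,n}$), evaluates at $\mu$ using the column-orthogonality relations~\eqref{columns2} together with $|Z_{S_n}(n-k,\nu)|=(n-k)|Z_{S_k}(\nu)|$, and then solves the resulting linear system by left-cancelling the (invertible) character table $K$ of $S_k$. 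You instead expand $s_\nu$ in power sums and extract the coefficient of $p_\mu$ from $\sum_{\rho\vdash k}z_\rho^{-1}\chi^\nu(\rho)\,p_{\rho\cup(n-k)}$; the observation that $n\ge 2k+2$ forces $n-k$ to occur in $\mu$ with multiplicity at most one (and then as $\mu_1$) plays exactly the role of Remark~\ref{centralizer1}, via $z_\mu=(n-k)z_{\bar\mu}$. What your version buys is the elimination of the matrix-inversion step and a uniform treatment of both cases of~\eqref{general.conjecture} as a single coefficient extraction; what the paper's version buys is that it never leaves the language of character tables, which is the language in which the resulting identities are meant to be read. Your closing alternative --- peeling the part $n-k$ off each $\chi^{\nu*S_j}(\mu)$ directly by Murnaghan--Nakayama, with Lemma~\ref{uniqueness1} guaranteeing a unique removable strip so that the signs square to $+1$ and the $n-k$ terms add up --- is a clean shortcut, but note it only covers the case $\mu_1=n-k$ and does not by itself yield the vanishing statement, so it supplements rather than replaces your main argument.
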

The proof is given below.

\medskip
Let  $n\ge 2k+2$ and let $\nu\vdash k$, so $(n-k,\nu)$ is a partition of $n$. Let $\lambda\vdash n$ and assume $\chi^\lambda_{(n-k,\nu)}\ne 0$, then,
by the Murnaghan-Nakayama (M-N) rule,  $\lambda $ can be written, probably in several ways, as
$\lambda=\rho*S$ where $\rho\vdash k$, $S$ is part of the rim of $\lambda$ and $|S|=n-k$. By
Lemma~\ref{uniqueness1} this decomposition, with $|\rho|=k$, is unique, hence we can write
$\lambda\leftrightarrow (\rho_\lambda ,S_\lambda).$
Again by the M-N rule,with $\lambda=\rho*S$,
\begin{eqnarray}\label{basic2}
\chi^\lambda_{(n-k,\nu)}=(-1)^{h(S)+1}\chi^{\rho}_\nu=
(-1)^{h(S_\lambda)+1}\chi^{\rho_\lambda}_\nu.
\end{eqnarray}

Lemma~\ref{uniqueness1} allows us to prove the following formula.

\begin{prop}\label{usemn}
Let $\nu\vdash k$ and let $n\ge 2k+2$. Then
$$
\sum_{\lambda\vdash n}\chi^\lambda_{(n-k,\nu)}\chi^\lambda=\sum_{\rho\vdash k}
\chi^\rho_\nu \cdot \psi_{\rho,n}.
$$
\end{prop}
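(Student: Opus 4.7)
The plan is to expand the left-hand side by applying the Murnaghan-Nakayama rule to peel off the first part of the cycle type $(n-k,\nu)$, and then reindex the resulting double sum by grouping over $\rho\vdash k$ instead of over $\lambda\vdash n$.

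First, I would apply M-N to the cycle type $(n-k,\nu)$: for each $\lambda\vdash n$,
$$\chi^\lambda_{(n-k,\nu)} \;=\; \sum_{S} (-1)^{h(S)+1}\,\chi^{\lambda\setminus S}_\nu,$$
where $S$ ranges over rim hooks of $\lambda$ of size $n-k$. Since $|\lambda\setminus S|=k$, the uniqueness statement in Lemma~\ref{uniqueness1} (which requires precisely the hypothesis $n\ge 2k+2$) guarantees that there is at most one such rim hook. Hence either $\chi^\lambda_{(n-k,\nu)}=0$, or $\lambda$ decomposes uniquely as $\lambda = \rho_\lambda * S_\lambda$ with $\rho_\lambda\vdash k$ and $|S_\lambda|=n-k$, giving formula~\eqref{basic2}.

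Next I would substitute this into the LHS and swap the order of summation. The $\lambda$ that contribute are exactly those of the form $\rho*S$ with $\rho\vdash k$ and $|S|=n-k$, and by Lemma~\ref{uniqueness1} the map $(\rho,S)\mapsto \rho*S$ is injective. Therefore
$$\sum_{\lambda\vdash n}\chi^\lambda_{(n-k,\nu)}\,\chi^\lambda
\;=\; \sum_{\rho\vdash k}\,\sum_{S:\,\rho*S\,\vdash\, n} (-1)^{h(S)+1}\,\chi^\rho_\nu\,\chi^{\rho*S}
\;=\; \sum_{\rho\vdash k}\chi^\rho_\nu \cdot \Bigl(\sum_{S:\,\rho*S\,\vdash\, n}(-1)^{h(S)+1}\chi^{\rho*S}\Bigr).$$
Finally, Corollary~\ref{height3} identifies the bracketed inner sum with $\psi_{\rho,n}$, yielding the desired equality.

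The only real subtlety is the reindexing step, and it is entirely handled by Lemma~\ref{uniqueness1}: without it, a single $\lambda$ could admit several decompositions $\lambda=\rho*S$ with $\rho\vdash k$, so the M-N sum over rim hooks of size $n-k$ would have multiple terms and the bijection between $\lambda$'s and $(\rho,S)$-pairs used above would break down. Once uniqueness is in hand, the proof is essentially a bookkeeping exercise, and Corollary~\ref{height3} supplies the exact form of $\psi_{\rho,n}$ needed to match the inner sum.
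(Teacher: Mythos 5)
Your argument is correct and follows essentially the same route as the paper: both use the Murnaghan--Nakayama rule to peel off the part $n-k$, invoke Lemma~\ref{uniqueness1} to get the bijection between contributing $\lambda$'s and pairs $(\rho,S)$ (equivalently, that the M-N expansion has at most one term, giving~\eqref{basic2}), reindex the sum over $\rho\vdash k$, and finish with Corollary~\ref{height3}. Your write-up is if anything slightly more explicit than the paper's about why the M-N sum collapses to a single term.
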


\begin{proof}

\medskip
We just saw that
$$
\{\lambda\vdash n\}=\{\rho*S\vdash n\mid \rho\vdash k,~~|S|=n-k\}\cup
\{\lambda\vdash n\mid\chi^\lambda_{(n-k,\nu)}=0\},
$$
and by Lemma~\ref{uniqueness1}  we have the bijection
$$
\{\rho*S\vdash n\mid \rho\vdash k,~~|S|=n-k\}\longleftrightarrow
\{(\rho,S)\mid \rho\vdash k,~~|S|=n-k\quad\mbox{and}\quad \rho*S\vdash n\}.
$$
We denote
$$
A_{k,n}=\{(\rho,S)\mid \rho\vdash k,~~|S|=n-k\quad\mbox{and}\quad \rho*S\vdash n\}.
$$
By~\eqref{basic2}
$$
\sum_{\lambda\vdash n}\chi^\lambda_{(n-k,\nu)}\chi^\lambda=
\sum_{(\rho,S)\in A_{k,n}}(-1)^{h(S)+1}\chi^\rho_\nu\cdot\chi^\lambda=
$$
$$
\sum_{\rho\vdash k}~~\sum_{S,\;\rho*S\vdash n}(-1)^{h(S)+1}\chi^\rho_\nu\cdot\chi^{\rho*S}=
$$
$$
~~~~~~~~~~~~~
\sum_{\rho\vdash k}\chi^\rho_\nu~\sum_{S,\;\rho*S\vdash n}(-1)^{h(S)+1}\chi^{\rho*S}=
\sum_{\rho\vdash k}\chi^\rho_\nu\cdot\psi_{\rho,n}.
$$
The last equality applied corollary~\ref{height3}.
\end{proof}

\begin{remark}\label{u33}
\end{remark}
Let $d=|\{Par(k)\}|.$
Note that in matrix form, Proposition~\ref{usemn} can be written as follows:
\begin{eqnarray}\label{usemn2}
\left[\chi^\rho_\nu\right]\left[ \psi_{\rho,n}\right]=
\left[\sum_{\lambda\vdash n}\chi^\lambda _{(n-k,\nu)}\cdot\chi^\lambda \right].
\end{eqnarray}
Here
$\left[\chi^\rho_\nu\right]$ is the $d\times d$ character table of $S_k$, and $\left[ \psi_{\rho,n}\right]$
is a column of height $d$. Of course,
the locations of the entries of
both $\left[\chi^\rho_\nu\right]$ and $\left[ \psi_{\rho,n}\right]$
depend on how we order $Par(k)$.
Applying~\eqref{usemn2} on $\mu\vdash n$ we get
\begin{eqnarray}\label{usemn3}
\left[\chi^\rho_\nu\right]\left[ \psi_{\rho,n}(\mu)\right]
=\left[\sum_{\lambda\vdash n}\chi^\lambda _{(n-k,\nu)}\cdot\chi^\lambda_\mu \right]
\end{eqnarray}

Recall the classical column-orthogonality-relations for the $S_n$ characters.
\begin{thm}\label{v5} (The column-orthogonality-relations)
\begin{eqnarray}\label{columns1}
\sum_{\lambda\vdash n}\chi^\lambda_\eta\chi^\lambda_\mu=\begin{cases}|Z_{S_n}(\eta)| & if ~\mu=\eta
\\
0 & if~ \mu\ne \eta,\\
\end{cases}
\end{eqnarray}
where $\lambda,\eta,\mu\vdash n$, and $Z_{S_n}(\eta)$
 is the centralizer of $\eta$  in $S_n$ (i.e~the centralizer of $\pi\in S_n$ with
cycle structure $\eta$).
Let $K=K^{(n)}=\left[ \chi_{\eta}^{\theta}\right]$ denote the character table of $S_n$, then
~\eqref{columns1} can be written as
\begin{eqnarray}\label{columns2}
 KK^T=diag(|Z_{S_n}(\eta)|,\mid \eta\vdash n).
  \end{eqnarray}

\end{thm}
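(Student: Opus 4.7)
The plan is to deduce the column-orthogonality relations from the classical row-orthogonality (Schur first orthogonality) relations for the irreducible characters of a finite group, together with the fact that the character table of $S_n$ is a square matrix whose rows and columns are both indexed by partitions $\lambda\vdash n$.

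First I would recall Schur's first orthogonality relations for an arbitrary finite group $G$: for irreducible characters $\chi^\theta, \chi^\lambda$,
\begin{eqnarray*}
\frac{1}{|G|}\sum_{g\in G}\chi^\theta(g)\overline{\chi^\lambda(g)}=\delta_{\theta,\lambda}.
\end{eqnarray*}
Specializing to $G=S_n$ (whose irreducible characters are integer-valued, so the complex conjugate disappears) and grouping the sum by conjugacy classes, which are indexed by partitions $\eta\vdash n$ and have size $n!/|Z_{S_n}(\eta)|$, this becomes
\begin{eqnarray*}
\sum_{\eta\vdash n}\frac{\chi^\theta_\eta\,\chi^\lambda_\eta}{|Z_{S_n}(\eta)|}=\delta_{\theta,\lambda}.
\end{eqnarray*}

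Next I would rewrite this identity in the matrix form stated in the theorem. With $K=[\chi^\theta_\eta]$ the character table of $S_n$ and $D=\mathrm{diag}(1/|Z_{S_n}(\eta)|\mid \eta\vdash n)$, the weighted row orthogonality reads $KDK^T=I$.

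Finally, and this is the crucial step, I would invoke the fact that $K$ is a square matrix: the number of irreducible characters of $S_n$ equals the number of conjugacy classes of $S_n$, both being equal to the number of partitions of $n$. Hence $KD$ is a two-sided inverse of $K^T$, so $K^T(KD)=I$ as well, which gives $K^TK=D^{-1}=\mathrm{diag}(|Z_{S_n}(\eta)|\mid\eta\vdash n)$. Reading off entry $(\mu,\eta)$ of this diagonal identity yields precisely
\begin{eqnarray*}
\sum_{\lambda\vdash n}\chi^\lambda_\mu\,\chi^\lambda_\eta=\begin{cases}|Z_{S_n}(\eta)| & \mu=\eta\\ 0 & \mu\ne\eta,\end{cases}
\end{eqnarray*}
which is~\eqref{columns1}, and the matrix form~\eqref{columns2} is just a rewriting. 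The only real content beyond row orthogonality is the square-matrix observation; this is the step where one has to be careful, since for a non-square matrix a one-sided inverse does not give a two-sided inverse, so one cannot deduce column orthogonality from row orthogonality for general groups without the equality "number of irreducibles $=$ number of conjugacy classes." For $S_n$ this equality is automatic once one knows the bijection between partitions and both sets, so I expect no substantive obstacle in the proof.
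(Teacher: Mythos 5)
The paper itself offers no proof of Theorem~\ref{v5}; it is quoted as a classical fact. Your derivation is correct and is the standard one: Schur's first (row) orthogonality gives $KDK^T=I$ with $D=\mathrm{diag}(1/|Z_{S_n}(\eta)|)$, and since $K$ is square (number of irreducibles $=$ number of conjugacy classes $=$ number of partitions of $n$), a one-sided inverse is two-sided, yielding the column relations. The only cosmetic discrepancy is that you arrive at $K^TK=\mathrm{diag}(|Z_{S_n}(\eta)|)$ while the paper writes $KK^T$; this is just the transposed indexing convention for the character table (rows by classes versus rows by irreducibles), and either form reads off entrywise as~\eqref{columns1}. No gap.
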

Then  numbers $|Z_{S_n}(\eta)|$ are calculated through the following well known formula.
\begin{thm}\label{v1}
Let $\eta\vdash n$, $\eta=(1^{m_1},2^{m_2},\ldots )$, and let $Z_{S_n}(\eta)$ denote the
centralizer in $S_n$ of  $\sigma\in S_n$
where $\eta$ is the cycle structure of $\sigma$. Then
$$
|Z_{S_n}(\sigma)|=|Z_{S_n}(\eta)|=\prod_i (i^{m_i}\cdot m_i!).
$$
\end{thm}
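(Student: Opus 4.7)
The plan is to deduce the formula from the orbit--stabilizer theorem, using the classical fact that two permutations in $S_n$ are conjugate if and only if they have the same cycle type. Granting this, the conjugacy class $C(\sigma)$ of $\sigma$ coincides with the set of all permutations of $\{1,\ldots,n\}$ whose cycle structure is $\eta$, and
\[
|Z_{S_n}(\sigma)| = \frac{n!}{|C(\sigma)|}.
\]
So the task reduces to counting $|C(\sigma)|$.

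First I would set up a surjection from the $n!$ linear orderings $(a_1,a_2,\ldots,a_n)$ of $\{1,\ldots,n\}$ onto $C(\sigma)$: slice such an ordering into consecutive blocks of sizes
\[
\underbrace{1,\ldots,1}_{m_1},\ \underbrace{2,\ldots,2}_{m_2},\ \underbrace{3,\ldots,3}_{m_3},\ \ldots
\]
and interpret each block as a cycle written in one-line form. Clearly the resulting permutation has cycle type $\eta$, and every element of $C(\sigma)$ arises in this way.

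Next I would count the fiber of a fixed $\tau\in C(\sigma)$. A block of length $i$ encodes an $i$-cycle, but the same $i$-cycle can be written starting at any of its $i$ elements, contributing a factor of $i$ per cycle of length $i$, and hence $i^{m_i}$ per cycle length. In addition, the $m_i$ blocks of length $i$ may be permuted among themselves without changing $\tau$, contributing a factor of $m_i!$ per cycle length. Thus every $\tau\in C(\sigma)$ has exactly $\prod_i (i^{m_i}m_i!)$ preimages, whence
\[
|C(\sigma)| = \frac{n!}{\prod_i (i^{m_i}\, m_i!)},
\]
and the orbit--stabilizer identity yields the theorem.

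There is no real obstacle here; the argument is a routine double-counting. The only point requiring care is verifying that the fiber description is both correct and exhaustive, i.e.\ that two orderings produce the same permutation precisely when they differ by the rotations within each cyclic block together with a permutation of the equal-length blocks --- which is immediate from the fact that the block structure itself is determined by $\eta$. (An alternative, equally short approach would be to describe $Z_{S_n}(\sigma)$ directly as the wreath-product-like group $\prod_i (\ZZ/i\ZZ)\wr S_{m_i}$ acting on each set of $i$-cycles, and simply read off its order.)
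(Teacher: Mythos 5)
The paper states this as a ``well known formula'' and offers no proof of its own, so there is nothing to compare against line by line. Your argument is correct and is the standard one: identify the conjugacy class of $\sigma$ with the set of permutations of cycle type $\eta$, count it by the surjection from the $n!$ linear orderings onto that class with fibers of size $\prod_i (i^{m_i} m_i!)$ (rotations within each cycle and permutations of equal-length blocks), and finish with orbit--stabilizer. The fiber analysis is exhaustive for exactly the reason you give --- the block structure is fixed by $\eta$ --- so the proof is complete; the alternative description of the centralizer as a product of wreath products $(\ZZ/i\ZZ)\wr S_{m_i}$ that you mention in passing would work equally well.
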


\begin{remark}\label{centralizer1}
Let $\nu\vdash k$, $n\ge 2k+2$. Then
$(n-k,\nu)$ is a partition (of $n$), and
$$|Z_{S_n}(n-k,\nu)|=|Z_{S_k}(\nu)|\cdot (n-k).$$
\end{remark}
This follows from Theorem~\ref{v1} since $n-k$ is strictly larger than any component of $\nu$.
Together with~\eqref{columns1} this implies

\begin{cor}\label{centralizer2}
Let $\nu\vdash k$, $n\ge 2k+2$, $\mu=(\mu_1,\mu_2,\ldots)\vdash n$ and let $\bar \mu=(\mu_2,\mu_3,\ldots)$. Then
\begin{eqnarray}\label{columns12}
\sum_{\lambda\vdash n}\chi^\lambda_{(n-k,\nu)}\chi^\lambda_\mu=
\begin{cases}|Z_{S_k}(\nu)|\cdot(n-k) & if ~\bar\mu=\nu
\\
0 & if~ \mu\ne \eta.\\
\end{cases}
\end{eqnarray}
\end{cor}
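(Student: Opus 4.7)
The plan is to obtain Corollary~\ref{centralizer2} as an immediate specialization of the column-orthogonality relations \eqref{columns1}, combined with the centralizer computation of Remark~\ref{centralizer1}. All the real content already sits in Theorem~\ref{v5} and Remark~\ref{centralizer1}; the work left to do is only to match up the indexing conventions on $\mu$ versus $(n-k,\nu)$.

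First I would set $\eta=(n-k,\nu)$ in \eqref{columns1}. For this to be legitimate I need $(n-k,\nu)$ to be a partition of $n$, which holds because $n\ge 2k+2$ gives $n-k\ge k+2>\nu_1$ (and in particular $n-k$ is strictly larger than every part of $\nu$, a fact we will reuse in a moment). Plugging $\eta=(n-k,\nu)$ into \eqref{columns1} yields
\begin{eqnarray*}
\sum_{\lambda\vdash n}\chi^\lambda_{(n-k,\nu)}\chi^\lambda_\mu
=\begin{cases}|Z_{S_n}(n-k,\nu)| & \text{if }\mu=(n-k,\nu),\\0 & \text{otherwise.}\end{cases}
\end{eqnarray*}

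Next I would translate the condition $\mu=(n-k,\nu)$ into the condition $\bar\mu=\nu$ stated in the corollary. If $\bar\mu=\nu$, then $\mu_1=n-|\bar\mu|=n-k$, and since $n-k>\nu_1=\mu_2$ the tuple $(\mu_1,\mu_2,\ldots)$ is automatically a partition; conversely if $\mu=(n-k,\nu)$ then $\bar\mu=\nu$. Thus the two cases in \eqref{columns12} correspond exactly to the two cases of column orthogonality.

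Finally I would invoke Remark~\ref{centralizer1} to rewrite
$|Z_{S_n}(n-k,\nu)|=|Z_{S_k}(\nu)|\cdot(n-k)$,
which turns the top line into the desired $|Z_{S_k}(\nu)|\cdot(n-k)$. There is no genuine obstacle here; the only subtlety is the short check that $\bar\mu=\nu$ is equivalent to $\mu=(n-k,\nu)$, and that check is exactly where the hypothesis $n\ge 2k+2$ is used (to guarantee $n-k>\nu_1$, so that placing $n-k$ in front of $\nu$ produces a legitimate partition and so that the first row of $\mu$ is unambiguously $n-k$).
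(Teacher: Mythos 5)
Your proposal is correct and follows exactly the route the paper intends: specialize the column-orthogonality relation \eqref{columns1} to $\eta=(n-k,\nu)$ and substitute the factorization $|Z_{S_n}(n-k,\nu)|=|Z_{S_k}(\nu)|\cdot(n-k)$ from Remark~\ref{centralizer1}. Your explicit check that $\bar\mu=\nu$ is equivalent to $\mu=(n-k,\nu)$ (using $n-k>\nu_1$) is a welcome clarification of a step the paper leaves implicit.
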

\subsection{The proof of Theorem~\ref{v4} }
Apply~\eqref{usemn3} and Corollary~\ref{centralizer2}: With
$\rho$ and $\nu$ denoting partitions of $k$ and $\mu$ partitions of $n$
 we have
$$
\left[\chi^\rho_\nu\right]\left[ \psi_{\rho,n}(\mu)\right]
=\left[\sum_{\lambda\vdash n}\chi^\lambda _{(n-k,\nu)}\cdot\chi^\lambda_\mu \right]
=\left[\begin{cases}|Z_{S_k}(\nu)|\cdot(n-k) & if ~\bar\mu=\nu
\\
0 & if~ \bar\mu\ne \nu.\\
\end{cases}
\right]=
$$
\begin{eqnarray}\label{usemn31}
~~~~~~~~~~~~~~~~~~~~~~~~~~~~~~~~~~~~~~~~~~~~~~~~~~~~~~~~~
=\left[\begin{cases}|Z_{S_k}(\nu)| & if ~\bar\mu=\nu
\\
0 & if~ \bar\mu\ne \nu.\\
\end{cases}
\right]\cdot(n-k)
\end{eqnarray}
Restrict now to partitions $\mu=(n-k,\bar\mu)$ (so $\mu$ is given by $\bar\mu$), then~\eqref{usemn31} becomes

$$
\left[\chi^\rho_\nu\right]\left[ \psi_{\rho,n}(n-k,\bar\mu)\right]
=\left[\sum_{\lambda\vdash n}\chi^\lambda _{(n-k,\nu)}\cdot\chi^\lambda_{(n-k,\bar\mu)} \right]
=\left[\begin{cases}|Z_{S_k}(\nu)|\cdot(n-k) & if ~\bar\mu=\nu
\\
0 & if~ \bar\mu\ne \nu.\\
\end{cases}
\right]=
$$

\begin{eqnarray}\label{usemn311}
~~~~~~~~~~~~~~~~~~~~~~~~~~~~~~~~~~~~~~~~~~~~~~~~~~~~~~~~~
=\left[\begin{cases}|Z_{S_k}(\nu)| & if ~\bar\mu=\nu
\\
0 & if~ \bar\mu\ne \nu\\
\end{cases}
\right]\cdot(n-k).
\end{eqnarray}

Note that each side of~\eqref{usemn311} is a column of height $d=|\{Par(k)\}|$, parametrized
by the partitions $\bar\mu\in Par(k)$. Thus, if $\nu=\bar\mu^{(j)}$ is the $j$th element in
$Par(k)$ then the transpose of the corresponding column of~\eqref{usemn311} is
$$
(0,\ldots,0,|Z_{S_k}(\bar\mu^{(j)})|,0,\ldots,0)\cdot (n-k).
$$
The columns $\left[ \psi_{\rho,n}(n-k,\bar\mu)\right]$ of height $d$
is parametrized by $\rho\in Par(k)$, with $\bar\mu$ fixed.
From these columns we form the $d\times d$ matrix
$$
M=\left[ \psi_{\rho,n}(n-k,\bar\mu^{(1)}),\psi_{\rho,n}(n-k,\bar\mu^{(2)}),\ldots,
 \psi_{\rho,n}(n-k,\bar\mu^{(d)})\right].
$$
Also denote
 $K=\left[\chi^\rho_\nu\right]$ and $D=diag( |Z_{S_k}(\bar\mu^{(1)})|,\ldots,|Z_{S_k}(\bar\mu^{(d)})| )$.
By~\eqref{usemn311} we get
$$
\left[\chi^\rho_\nu\right] M=KM=D\cdot(n-k)
=diag( |Z_{S_k}(\bar\mu^{(1)})|,\ldots,|Z_{S_k}(\bar\mu^{(d)})| )\cdot(n-k),
$$
and by~\eqref{columns2}
$$
KK^T=D=diag( |Z_{S_k}(\bar\mu^{(1)})|,\ldots,|Z_{S_k}(\bar\mu^{(d)})| ).
$$

Thus we have $KM=KK^T\cdot(n-k).$

\medskip
Note that $K$ is the character table of $S_k$, hence is invertible.
Left cancelation of $K$ implies that $M=K^T\cdot (n-k)$, so $M$ = (the character table of $S_k)\cdot(n-k)$.
This completes the proof of Theorem~\ref{v4}.

\subsection{Applications}

\begin{example}
1.~In Example~\ref{v2}.1 we saw that $\psi_{\emptyset,n}= \sum_{j=0}^{n-1}(-1)^j\chi^{(n-j,1^j)}.$
Theorem~\ref{v4} with $k=0$ then implies that
\begin{eqnarray}\label{general.conjecture}
  \psi_{\emptyset,n}(\mu)= \sum_{j=0}^{n-1}(-1)^j\chi^{(n-j,1^j)}(\mu) =\begin{cases}n & if  ~\mu= (n)
\\
0 & if ~\mu\ne (n)\\
\end{cases}
\end{eqnarray}

2.~Similarly, by Example~\ref{v2}.2,
$
 ~\psi_{(1),n}=\chi^{(n)}+\sum_{j=0}^{n-4}(-1)^{j+1}\chi^{(n-2-j,2,1^j)} + (-1)^n\chi^{(1^n)},
 $
and we get
$$
\psi_{(1),n}(\mu)=~~~~~~~~~~~~~~~~~~~~~~~~~~~~~~~~~~~~~~~~~~~~~~~~~~~~~~~
~~~~~~~~~~~~~~~~~~~~~~~~~~~~~~~~~~~~~~~~~~~~~~~~~~~~~~~~~~~~
$$

\begin{eqnarray}\label{general.conjecture2}
  =\chi^{(n)}(\mu)+\sum_{j=0}^{n-4}(-1)^{j+1}\chi^{(n-2-j,2,1^j)}(\mu) + (-1)^n\chi^{(1^n)}(\mu) =\begin{cases}n-1 & if  ~\mu= (n-1,1)
\\
0 & if ~\mu\ne (n-1,1)\\
\end{cases}
\end{eqnarray}

Clearly, there are infinitely many identities that can be deduced this way.

\medskip
3.~ Consider the first column $\{f^\lambda\mid \lambda\vdash n\}$ in the charcter table of
$S_n.$ Let $n\ge 2k+2$, fix some $\nu\vdash k$, then construct the sequence of partitions of $n$:
$\nu*S_1,\nu*S_2,\ldots$. Finally, form the corresponding alternating sum, then always
\begin{eqnarray}\label{v22}
\sum_{j=1}^{n-k}(-1)^jf^{\nu*S_j}=0.
\end{eqnarray}
This follows from Theorem~\ref{v4}, since this corresponds to $\mu=(1^n)$, so $\mu_1=1\ne n-k$.
When $k=0$ this is the well known identity
\begin{eqnarray}\label{general.conjecture02}
\sum_{j=1}^n(-1)^j{n\choose j}=0,
\end{eqnarray}
see~\eqref{general.conjecture}. Thus,~\eqref{v22} can be seen as a generalization
of~\eqref{general.conjecture02}.

\end{example}

\begin{remark}

In~\cite{regev} a formula is proved for the values
$$
\sum_{i\ge 0} \chi^{(n-i,1^i)}(\mu),\quad \mu\vdash n.
$$
Of course, here we deduced a formula for the values for the corresponding alternating sum
$$
\sum_{i\ge 0} (-1)^i\chi^{(n-i,1^i)}(\mu),\quad \mu\vdash n.
$$

Adding, we get a formula for
$$
\sum_{i~even} \chi^{(n-i,1^i)}(\mu),\quad \mu\vdash n,
$$
hence also for
$$
\sum_{i~odd} \chi^{(n-i,1^i)}(\mu),\quad \mu\vdash n.
$$
We leave the details for the reader.
\end{remark}

\section{Final remarks}
\begin{remark}\label{(1)}
Let $n\ge 4$ and let $A_n$ be the set of partitions obtained by walking around $\nu=(1)$:
$$
A_n=\{(n),(1^n)\}\cup\{(r,2,1^{n-2-r})\mid 2\le r\le n-2\}.
$$
Then
\begin{eqnarray}\label{(1)2}
\sum_{\lambda\in A_n}f^\lambda=(n-4)\cdot 2^{n-2}+4.
\end{eqnarray}

\begin{proof}
\medskip
Let $\lambda=(r,2,1^{n-2-r})$, then
$$
f^\lambda=\frac{n!}{(r-2)! (n-2-r)! r (n-r) (n-1)}=\frac{n(n-2)(n-3)}{r(n-r)}{n-4\choose r-2}.
$$
Thus, omitting $\lambda\in \{(n),(1^n)\}$, we need to show that
\begin{eqnarray}\label{wz}
\sum_{r=0}^{n-4}\frac{n(n-2)(n-3)}{r(n-r)}{n-4\choose r-2} =(n-4)\cdot 2^{n-2}+2.
\end{eqnarray}
This can be proved by the Zeilberger Algorithm~\cite{doron}, and is implemented in Maple (function SumTools[Hypergeometric][Zeilberger]).
\end{proof}
\end{remark}
Here is a direct proof.
\begin{proof}
[Zeilberger]  Note that
$$
\frac{1}{r(n-r)}=\frac{1}{n}\left( \frac{1}{r}+\frac{1}{n-r}\right),
$$
 so the sum in~\eqref{wz} equals
$$
(n-2)(n-3)\sum_{r=2}^{n-2}\frac{1}{r}{n-4\choose r-2}+
(n-2)(n-3)\sum_{r=2}^{n-2}\frac{1}{n-r}{n-4\choose r-2}.
$$
By symmetry these two summands are equal, so this equals
$$
2(n-2)(n-3)\sum_{r=2}^{n-2}\frac{1}{r}{n-4\choose r-2}=~~~~~~~~~~~~~~~~~~~~~~~~~~~~~~~~~~~
$$
$$
2(n-2)(n-3)\sum_{r=2}^{n-2}\left(\int_0^1 x^{r-1} dx\right){n-4\choose r-2}=
$$
$$
2(n-2)(n-3)\sum_{r=2}^{n-2}\left(\int_0^1 x^{r-1} dx{n-4\choose r-2}\right)=
$$
$$
2(n-2)(n-3)\sum_{r=0}^{n-4}\left(\int_0^1 x^{r+1} dx{n-4\choose r}\right)=
$$
$$
2(n-2)(n-3)\int_0^1 x(1+x)^{n-4} dx=
$$
$$
2(n-2)(n-3)\left( \int_0^1 (1+x)^{n-3}dx- \int_0^1 (1+x)^{n-4}dx  \right)=
$$
$$
=(n-4) 2^{n-2}+2.
$$
\end{proof}

Here is a second, combinatorial proof [Zeilberger].

\medskip
Let $f(n)$ be the number of standard Young tableaux of shape $(r,2,1^{n-2-r})$ for some
$2\le r\le n-2$, and look at the location of $n$.

\medskip
Case 1: It is at the rightmost cell of the top row. Deleting it gives something counted by $f(n-1)$.

\medskip
Case 2. The conjugate case, also yielding  $f(n-1)$.

\medskip
Case 3: $n$ is in the $(2,2)$ cell. Deleting it gives a tableau of strict hook shape  (namely,
not $(n-1)$ nor $(1^{n-1})$). Clearly, the number of these tableaux is $2^{n-1}-2$.

\medskip
So we have the recurrence $f(n)=2f(n-1) +2^{n-1}-2$. Since $f(4)=2$, it follows by induction that
$f(n)=(n-4) 2^{n-2}+2$.

\subsection{The polynomials $p_{\nu,n}(t)$}

\medskip
Recall Equation~\eqref{v22}, replace minus $1$ by $t$ and get the polynomials
\begin{eqnarray}\label{v222}
p_{\nu,n}(t)=\sum_{j=1}^{n-k}f^{\nu*S_j}\cdot t^j.
\end{eqnarray}
In the case $\nu$ is empty we get $p_{\emptyset,n}(t)=(t+1)^{n-1}$. Denote
$p_{(1),n}(t)=p_n(t)$ in the case $\nu=(1)$. By direct computations we got the following polynomials,
for $1\le n\le 9$.

\newpage
\medskip
$p_4(t)=(t+1)^2$

\medskip
$p_5(t)=(t+1)(t^2+4t+1)$

\medskip
$p_6(t)=(t+1)^2(t^2+7t+1)$

\medskip
$p_7(t)=(t+1)(t^4+13t^3+22t^2+13t+1)$

\medskip
$p_8(t)=(t+1)^2 (t^4+18t^3+27t^2+18t+1)$

\medskip
$p_9(t)=(t+1)(t^6+26t^5+79t^4+110t^3+79t^2+26t+1)$

\medskip
$p_{10}(t)=(t+1)^2(t^6+33t^5+93t^4+131t^3+93t^2+33t+1)$

\bigskip
By~\eqref{v22} $p_n(t)$ is divisible by $t+1$ for all $n$.


\begin{conjecture}
We conjecture that when $n$ is even, the highest power of $t+1$ which divides $p_n(t)$ is 2,
while when $n$ is odd, the highest power on $t+1$ which divides $p_n(t)$ is 1.

\medskip
One also conjectures positiveness and unimodality on the coefficients of these polynomials.

Additional conjecture is as follows. Denote $p_{2k+1}(t)=(t+1)\cdot q_{2k+1}(t)$, then
$q_{2k+1}(-1)=-2$.

\end{conjecture}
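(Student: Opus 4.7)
The plan is to pass from the polynomial $p_n(t)$ to explicit finite sums via the hook-length formula
$$f^{(r,2,1^{n-2-r})}=\frac{n(n-2)(n-3)}{r(n-r)}\binom{n-4}{r-2}$$
computed earlier in this section, and then to evaluate $p_n(-1),\,p_n'(-1),\,p_n''(-1)$ by the integral trick $\frac{1}{m}=\int_0^1 x^{m-1}\,dx$ used in the direct proof above. Writing
$$p_n(t)=1+t^{n-2}+\sum_{r=2}^{n-2}c_r\,t^{n-r-1},\qquad c_r:=\frac{n(n-2)(n-3)}{r(n-r)}\binom{n-4}{r-2},$$
we exploit the palindromic identity $c_r=c_{n-r}$ throughout.

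For the divisibility claim and the additional conjecture $q_{2k+1}(-1)=-2$, the task reduces to computing $p_n'(-1)$ and (for even $n$) $p_n''(-1)$, since $p_n(-1)=0$ is supplied by~\eqref{v22}. After using $p_n(-1)=0$ to rewrite $p_n'(-1)=\sum_{j=1}^{n-1}(-1)^j j\,f^{\lambda_j}$, peeling off the boundary terms at $j=1$ and $j=n-1$ and substituting $k=j-2$ in the middle sum, the computation collapses to
$$\sum_{k=0}^{n-4}\frac{(-1)^k}{n-k-2}\binom{n-4}{k}=\int_0^1 x(x-1)^{n-4}\,dx=\frac{(-1)^n}{(n-2)(n-3)},$$
and reassembling gives $p_n'(-1)=-1+(-1)^n$. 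This vanishes precisely when $n$ is even; in the odd case, the factorization $p_n=(t+1)q_n$ yields $q_n(-1)=p_n'(-1)=-2$, settling the additional conjecture and simultaneously showing $(t+1)^2\nmid p_n$. For even $n$, applying the same strategy to $p_n''(-1)$ brings in $\int_0^1 x^2(x-1)^{n-4}\,dx$ and produces $p_n''(-1)=-2(n-1)$ for $n\ge 6$, with the small case $p_4''(-1)=2$ handled directly from $p_4(t)=(t+1)^2$; both values are nonzero, so $(t+1)^3\nmid p_n$, completing the divisibility claim.

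Positivity of the coefficients is immediate, since each equals a dimension $f^{\lambda_j}\ge 1$. For unimodality, the palindromy $c_r=c_{n-r}$ reduces matters to showing $1\le c_2\le c_3\le\cdots\le c_{\lfloor n/2\rfloor}$; using
$$\frac{c_{r+1}}{c_r}=\frac{r(n-r)(n-r-2)}{(r-1)(r+1)(n-r-1)},$$
the inequality $c_{r+1}\ge c_r$ becomes the polynomial statement $r(n-r)(n-r-2)\ge(r^2-1)(n-r-1)$, which is routine to verify for $2\le r\le \lfloor n/2\rfloor-1$; the base case $c_2=n(n-3)/2\ge 1$ holds for all $n\ge 4$.

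I expect the main obstacle to be unimodality: log-concavity of $\{c_r\}$ evidently holds numerically and would yield the claim at once, but a fully rigorous proof of the ratio inequality requires careful bookkeeping near the midpoint $r\approx n/2$, with slightly different casework for $n$ even versus $n$ odd. The divisibility and additional parts, by contrast, rest on the clean integral identities already used in the paper and should go through mechanically.
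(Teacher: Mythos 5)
The paper offers no proof of this statement---it is stated and left as a conjecture---so there is nothing of the author's to compare your argument against; it has to stand on its own. On the divisibility claim and on $q_{2k+1}(-1)=-2$ your outline is sound and, as far as I can check, correct. Working with the normalization $p_n(t)=\sum_{j=1}^{n-1}f^{\lambda_j}t^{j-1}$ used in the displayed list (the extra factor of $t$ in~\eqref{v222} is irrelevant to divisibility by $t+1$), your reduction of $p_n'(-1)$ to
$$\sum_{k=0}^{n-4}\frac{(-1)^k}{n-k-2}\binom{n-4}{k}=\int_0^1x(x-1)^{n-4}\,dx=\frac{(-1)^n}{(n-2)(n-3)}$$
is correct, and reassembling (the middle sum contributes $n(-1)^n$, the boundary terms $-1+(n-1)(-1)^{n-1}$) indeed gives $p_n'(-1)=-1+(-1)^n$. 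The same method gives $p_n''(-1)=-2-(-1)^n(2n-4)$, which is $-2(n-1)$ for even $n\ge 6$ as you claim; both values agree with the tabulated polynomials. Combined with $p_n(-1)=0$ from~\eqref{v22}, this settles the multiplicity claim and the additional conjecture. You should write out the second-derivative computation rather than assert it, but there is no obstacle there.

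The genuine gap is in the positivity/unimodality part. The coefficients of $p_n(t)$ itself are the dimensions $f^{\lambda_j}$, so their positivity is a triviality the author would hardly conjecture; the polynomials whose coefficients are being conjectured positive and unimodal are almost certainly the cofactors displayed in the factorizations, i.e.\ $q_n(t)=p_n(t)/(t+1)^{e_n}$ with $e_n\in\{1,2\}$ (for instance $t^4+13t^3+22t^2+13t+1$ for $n=7$). Your argument---positivity because $f^{\lambda_j}\ge 1$, unimodality via the ratio $c_{r+1}/c_r$---addresses only $p_n(t)$ and says nothing about $q_n(t)$: the coefficients of $q_n$ are alternating partial sums of the $f^{\lambda_j}$, and neither their positivity nor their unimodality is inherited from the corresponding properties of $p_n$. (For the record, your unimodality argument for $p_n$ itself does close: setting $s=n-r$, the required inequality $rs(s-2)\ge(r^2-1)(s-1)$ rewrites as $rs(s-r)-2rs+r^2+s-1\ge 0$, and for $2\le r\le\lfloor n/2\rfloor-1$ one has $s\ge r+2$, so the left side is at least $r^2+s-1>0$. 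But that is the easy half of the problem, and the half the conjecture most plausibly intends remains open in your write-up, as you yourself partly acknowledge.)
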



\end{document}